\def\mode{1}	
\pgfplotsset{compat=newest}
\pgfplotsset{every axis/.append style={
		label style={font=\normalsize},
		tick label style={font=\normalsize}  
}}
\gdef\Shortstack{\@ifnextchar[\@Shortstack{\@Shortstack[c]}}
\gdef\@Shortstack[#1]#2{%
	\leavevmode
	\vbox\bgroup
	\baselineskip-\p@\lineskip 3\p@
	\let\mb@l\hss\let\mb@r\hss
	\expandafter\let\csname mb@#1\endcsname\relax
	\let\\\@stackcr\setlength{\baselineskip}{#2}%
	\@ishortstack}
\let\NAT@parse\undefined
\newcommand\orcidicon[1]{\href{https://orcid.org/#1}{\includegraphics[scale=0.04]{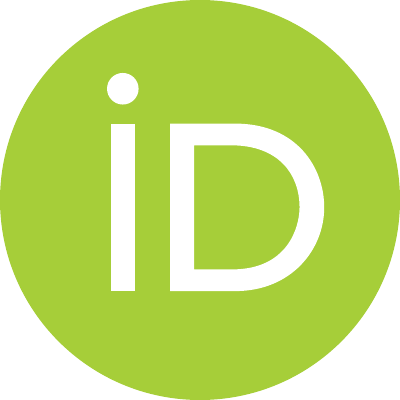}}}
\newcommand{\remove}[1]{\@bsphack\@esphack\xspace}
\newcommand{\arxiv}[1]{\@bsphack\@esphack\xspace}
\newcommand{\extended}[2]{#2\xspace} 
\newcommand{\paperType}{\if0\mode letter\xspace \else paper\xspace \fi}
\newcommand{\bs}[1]{\boldsymbol{#1}}
\newcommand{\cl}[1]{\mathcal{#1}}
\newcommand{\bb}[1]{\mathbb{#1}}
\newcommand{\lb}{\left(}
\newcommand{\rb}{\right)}
\newcommand{\ls}{\left[}
\newcommand{\rs}{\right]}
\newcommand{\lc}{\left\{}
\newcommand{\rc}{\right\}}
\newcommand{\lv}{\left\vert}
\newcommand{\rv}{\right\vert}
\newcommand{\Real}[1]{ { {\mathbb R}^{#1} } }
\DeclareMathOperator*{\argmin}{arg\;min}
\newcommand{\norm}[2][1]{\lVert#2\rVert_{#1}}
\newcommand{\tr}[1]{\operatorname{Tr}\lb#1\rb}
\newcommand{\tril}[1]{\operatorname{Tr}(#1)}
\renewcommand{\ker}[1]{\operatorname{ker}\lc#1\rc}
\newcommand{\keril}[1]{\operatorname{ker}\{#1\}}
\newcommand{\im}[1]{\operatorname{Col}\lc#1\rc}
\newcommand{\imil}[1]{\operatorname{Col}\{#1\}}
\newcommand{\rank}[1]{\operatorname{rk}\lc#1\rc}
\newcommand{\rankil}[1]{\operatorname{rk}\{#1\}}
\newcommand{\proj}[2]{\operatorname{proj}_{#2}\lc#1\rc}
\newcommand{\projil}[2]{\operatorname{proj}_{#2}\{#1\}}
\newcommand{\T}{\top}
\newcommand{\DeclareAutoPairedDelimiter}[3]{%
	\expandafter\DeclarePairedDelimiter\csname Auto\string#1\endcsname{#2}{#3}%
	\begingroup\edef\x{\endgroup
		\noexpand\DeclareRobustCommand{\noexpand#1}{%
			\expandafter\noexpand\csname Auto\string#1\endcsname*}}%
	\x}
\DeclareAutoPairedDelimiter{\ceil}{\lceil}{\rceil}
\DeclareAutoPairedDelimiter{\floor}{\lfloor}{\rfloor}
\newcommand{\e}{\mathrm{e}}
\newcommand{\x}[1]{\bs{x}(#1)}
\renewcommand{\u}[2][]{\bs{u}_{#1}(#2)}
\newcommand{\sch}[1][]{\cl{S}_{#1}}
\newcommand{\reach}[2]{\bs{\Phi}_{#1}^{#2}}
\newcommand{\gram}[2]{\bs{W}_{#1}^{#2}}
\newcommand{\costmin}{\rho_\text{min}}
\newcommand{\costcurr}{\rho_\text{curr}}
\newcommand{\tinit}{T_\text{in}}
\newcommand{\tmin}{T_\text{min}}
\newtheoremstyle{colorthm}{\topsep}{\topsep}{\normalfont}{1em}{\color{nblue}\sf\itshape\small\bfseries}{:}{ }{\thmname{#1}\thmnumber{ #2}{\thmnote{ (#3)}}}
\theoremstyle{colorthm}
\theoremstyle{plain}
\theoremstyle{plain}
\newtheorem{thm}{Theorem}
\newtheorem{prop}{Proposition}
\newtheorem{lemma}{Lemma}
\theoremstyle{definition}
\newtheorem{definition}{Definition}
\newtheorem{prob}{Problem}
\newtheorem{ass}{Assumption}
\newtheorem{rem}{Remark}
\newtheorem{ex}{Example}
\Crefname{thm}{Theorem}{Theorems}
\Crefname{cor}{Corollary}{Corollaries}
\Crefname{conj}{Conjecture}{Conjectures}
\Crefname{ass}{Assumption}{Assumptions}
\Crefname{prop}{Proposition}{Propositions}
\Crefname{figure}{Fig.}{Figures}
\newcommand{\review}[1]{{\color{black}#1}}
\newcommand{\blue}[1]{{\color{black} #1}}
\newcommand{\linkToPdf}[1]{\href{#1}{\blue{(pdf)}}}
\newcommand{\linkToPpt}[1]{\href{#1}{\blue{(ppt)}}}
\newcommand{\linkToCode}[1]{\href{#1}{\blue{(code)}}}
\newcommand{\linkToWeb}[1]{\href{#1}{\blue{(web)}}}
\newcommand{\linkToVideo}[1]{\href{#1}{\blue{(video)}}}
\newcommand{\linkToMedia}[1]{\href{#1}{\blue{(media)}}}
\newcommand{\award}[1]{\xspace} 
\newcommand{\eg}{\emph{e.g.,}\xspace}
\newcommand{\ie}{\emph{i.e.,}\xspace}
\newcommand{\ptitle}{\titlecap{Pointwise-Sparse Actuator Scheduling for Linear Systems with Controllability Guarantee
		\remove{: Optimized Greedy and MCMC Algorithms}
}}
\title{\ptitle}
\author{Luca~Ballotta\textsuperscript{\orcidicon{0000-0002-6521-7142}},
	Geethu~Joseph\textsuperscript{\orcidicon{0000-0002-5289-5403}},
	and~Irawati~Rahul~Thete\textsuperscript{\orcidicon{0009-0006-0072-9522}}
	\thanks{This work was supported in part	
		by the EU Horizon program through the project TWAIN, grant id 101122194.
		Views and opinions expressed in this work are of the authors and may not reflect those of the funding institutions.
	}%
	\thanks{Luca Ballotta is with the Delft Center for Systems and Control and Geethu Joseph is with the Signal Processing Systems Group, 
		all at the Delft University of Technology, Netherlands
		(e-mail: \{l.ballotta,g.joseph\}@tudelft.nl).}%
	\thanks{Irawati Rahul Thete is with Texas Instruments, Bengaluru, India. She was with the Signal Processing Systems Group at the Delft University of Technology, Netherlands, during the course of this work.
		(e-mail: irawati.thete@gmail.com).}
}
\title{\ptitle}
\author{Luca Ballotta, Geethu Joseph, and Irawati Rahul Thete
	\thanks{Luca Ballotta is with the Delft Center for Systems and Control, and Geethu Joseph is with the Electrical Engineering, Mathematics, and Computer Science, 
		both at the Delft University of Technology, Delft, Netherlands
		(e-mail: \{l.ballotta,g.joseph\}@tudelft.nl).}%
	\thanks{Irawati Rahul Thete is with Texas Instruments, Bengaluru, India
		(e-mail: irawati.thete@gmail.com).}
}
\begin{document}
	
	\bstctlcite{MyBSTcontrol}
	
	\maketitle
	
	\if 0\mode
	\pagestyle{empty}
	\thispagestyle{empty}
	\fi
	

\begin{abstract}
	\review{This \paperType considers the design of sparse actuator schedules for linear time-invariant systems.
    	An actuator schedule selects,
    	for each time instant,
    	which control inputs act on the system in that instant.}
    We address the optimal scheduling of control inputs under a hard constraint on the number of inputs that can be used at each time.
    For a sparsely controllable system,
    we characterize sparse actuator schedules that make the system controllable,
    and then devise a greedy selection algorithm that guarantees controllability while heuristically providing low control effort.
    We further show how to enhance our greedy algorithm via Markov chain Monte Carlo-based randomized optimization. 
	
	\if 1\mode
	\begin{IEEEkeywords}
		Actuator scheduling,
        control design,
        energy-aware control,
        greedy algorithm,
        sparse control.
	\end{IEEEkeywords}
	\fi
	
\end{abstract}

\section{Introduction}\label{sec:intro}

Sparsity constraints in control inputs arise in several large-scale systems\extended{,
such as networked control with limited bandwidth~\cite{Heemels10tac-networked}, budget-limited influence in marketing strategies~\cite{Joseph21siam-controllability},  sparse damping control in power grids~\cite{Siami21tac-actuatorScheduling}, and drug treatment in biological networks~\cite{Rajapakse12plos-can}.}
{. For example, sparse control inputs admit compact representations~\cite{foucart13-invitation}, conserving bandwidth in networked control systems~\cite{Heemels10tac-networked,Nagahara16-MaximumHands-OffControl}. In large-scale social networks, influencers or marketers promote their products or ideas by influencing a few (sparse) individuals and relying on word-of-mouth effects~\cite{Liu15InfoSc-Identifying,Joseph21siam-controllability}. Similarly, sparse high-voltage DC line scheduling for wide-area damping control in electrical power grids stabilizes fluctuations and synchronizes generators with reduced costs and energy depletion~\cite{Siami21tac-actuatorScheduling}. In biological networks characterized by model reactions and/or metabolites, drugs target a small number of nodes via sparse control to minimize adverse side effects~\cite{Rajapakse12plos-can,Olshevsky14tcns-minimalControllability}.}
Motivated by these applications, we address the challenge of designing control with fewer actuators without significantly compromising performance.

\extended{Two important sparse control paradigms are maximum hands-off control,
	which reduces active (nonzero) control periods~\cite{nagahara2020sparsity},
	and sparse feedback,
	which reduces the nonzero elements or rows in the feedback gain matrix~\cite{Jovanovic16ejc-ControllerArchitectures,Ballotta23tcns-decentralizedVsCentralized}.}{Sparse input strategies have existed in control theory for many years~\cite{athans1972determination},
	especially motivated by networked and large-scale systems, 
	but gained prominence following the emergence of compressed sensing~\cite{foucart13-invitation}.
	Closely related to sparse actuator control, sparsity-promoting optimization of the feedback gain matrix has been proposed as an effective method to trade controller complexity for performance, with actuator selection as a special case that enjoys convexity~\cite{Jovanovic16ejc-ControllerArchitectures,Bahavarnia17ifacwc-sparseLQR}. Furieri \emph{et al.}~\cite{Furieri20tcns-sparsityInvariance} studied a convex problem formulation based on pre-defined sparsity patterns. Matni and Chandrasekaran~\cite{Matni16tac-regularizationForDesign} proposed a regularization for design that can encode various constraints, such as communication locality. Anderson \emph{et al.}~\cite{Anderson19arc-sls} introduced the design framework System Level Synthesis, where sparsity patterns can be imposed directly on the closed-loop matrices. 
	The authors in~\cite{Ballotta23tcns-decentralizedVsCentralized,Ballotta23lcss-fasterConsensus} revealed a fundamental performance tradeoff arising with architecture-dependent delays and that sparse feedback can be optimal.
	Another important control paradigms focused on sparse design is maximum hands-off control, 
	which reduces active (nonzero) control periods~\cite{ikeda2018sparsity,ito2021sparse,nagahara2020sparsity}.}%
Some works have investigated sparsity directly in actuator use. 
An early approach selects a few inputs,
which remain constant over time,
to ensure controllability, 
tightly constraining the system~\cite{Olshevsky14tcns-minimalControllability}. 
Recent work minimizes the average number of inputs over a time horizon, 
allowing for time-varying inputs\cite{Siami21tac-actuatorScheduling}. 
However, this strategy can lead to non-sparse inputs at certain times,
which is unsuitable for applications like networked systems where bandwidth constraints must be satisfied at all times.
Alternatively, this work limits the number of control inputs active at every time~\cite{Joseph21tac-sparseControllability}, referred to as \emph{sparse actuator control}.

The theoretical foundation of sparse actuator control\extended{}{, 
covering the necessary and sufficient conditions for controllability and stabilizability,}
has been studied in \cite{Joseph21tac-sparseControllability,Joseph23tcns-output,Joseph22tac-Controllability,Sriram23tac-sparseStabilizability}, but sparse control input design is not well studied in the literature. 
A straightforward method uses sparse recovery algorithms to drive the system from a given state to a desired state~\cite{Sriram23tac-sparseStabilizability},
which requires distinct designs for state deviations. 
The standard,
naive greedy algorithm identifies a sparse actuator schedule\extended{}{ that enables a sparse control input sequence following the schedule} to transition the system from any initial state to any desired state~\cite{kondapi2024sparseactuatorschedulingdiscretetime}. 
Due to its heuristic nature, it does not assure controllability for all systems that are controllable under sparsity constraints. 
We address this literature gap and propose efficient algorithms for the design of sparse control inputs that formally guarantee controllability.

\subsubsection*{Contribution}
We first show that the naive greedy algorithm may fail to ensure controllability (\autoref{sec:design-greedy}),
which motivates our study. 
Then, we analytically characterize sparse actuator schedules that make the system controllable (\autoref{sec:feasible-set}) and devise an improved greedy algorithm that searches among such schedules (\autoref{sec:design-s-greedy}),
thus guaranteeing controllability if the system is controllable under the sparsity constraint. 
Also, we propose to improve the algorithm using a Markov chain Monte Carlo (MCMC)-based approach,
albeit with increased computational complexity (\autoref{sec:design-mcmc}).
Finally, in \autoref{sec:experiments}, we numerically study the performance of our greedy algorithm on two use cases where the naive greedy algorithm fails, as well as the improvement provided by MCMC.

\section{Sparse Actuator Scheduling Problem}\label{sec:setup}

Consider the discrete-time linear dynamical system $(\bs{A},\bs{B})$ with matrices $\bs{A}\in\Real{n\times n}$,
$\bs{B}\in\Real{n\times m}$ and state evolution
\begin{equation}\label{eq:systemmodel}
	\x{k+1} = \bs{A}\x{k} + \bs{B}\u{k},
\end{equation}
where $\x{k}\in\Real{n}$ and $\u{k}\in\Real{m}$ respectively denote state and control input at time $k$. 
The system $(\bs{A},\bs{B})$ is $s$-sparse controllable if it is controllable when $\norm[0]{\u{k}}\le s\ \forall k\ge0$,
\ie when at most $s$ input channels
are active (nonzero) at all times, where a channel is an element of $\u{k}$. 
We aim to design sparse inputs to drive the system from any initial state to a desired one.
We first review the existing literature.

\subsection{Preliminaries on Sparse Controllability}
\label{sec:preliminaries}

A simple test for $s$-sparse controllability is as follows,
\review{where $\rank{\bs{A}}$ denotes the rank of matrix $\bs{A}$.}
\begin{prop}[\!\!\!\protect{\cite[Theorem~1]{Joseph21tac-sparseControllability}}]\label{thm:sparse-controllability}
	System~\eqref{eq:systemmodel} is $s$-sparse controllable iff it is controllable and $s \ge \max\{n-\rank{\bs{A}}, 1\}$.
\end{prop}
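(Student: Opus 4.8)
The plan is to prove the two directions separately after recasting $s$-sparse controllability as a reachability condition. Writing $\bs{x}(N)=\bs{A}^N\bs{x}(0)+\sum_{k=0}^{N-1}\bs{A}^{N-1-k}\bs{B}\bs{u}(k)$ and reindexing by $j=N-1-k$, the set reachable from the origin in $N$ steps is the Minkowski sum $\mathcal{R}_N=\sum_{j=0}^{N-1}\bs{A}^j\mathcal{B}_s$, where $\mathcal{B}_s=\bigcup_{|\mathcal{S}|\le s}\operatorname{span}\{\bs{b}_i:i\in\mathcal{S}\}$ collects the spans of all $\le s$-column supports of $\bs{B}$. Since $\mathcal{R}_N$ is nondecreasing in $N$ and steering $\bs{x}(0)$ to $\bs{x}(N)$ amounts to realizing $\bs{x}(N)-\bs{A}^N\bs{x}(0)\in\mathcal{R}_N$, the system is $s$-sparse controllable iff $\mathcal{R}_N=\mathbb{R}^n$ for some finite $N$. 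Two facts give the easy necessary conditions: the constraint $\lVert\bs{u}(k)\rVert_0\le s$ only shrinks the admissible inputs, so $s$-sparse controllability forces ordinary controllability; and $s\ge 1$ is needed, else $\bs{u}\equiv\bs{0}$. I would also record the standard consequence of controllability that $\operatorname{span}\{\bs{A}^j\bs{b}_i\}=\mathbb{R}^n$, whence $\operatorname{im}\bs{A}+\operatorname{im}\bs{B}=\mathbb{R}^n$ and, applying $\bs{A}^\ell$, the identity $\operatorname{im}\bs{A}^\ell=\operatorname{im}(\bs{A}^\ell\bs{B})+\operatorname{im}\bs{A}^{\ell+1}$ for all $\ell$.

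For necessity of $s\ge n-\operatorname{rank}\bs{A}$, I would note that in $\bs{x}(N)$ only the last term $\bs{B}\bs{u}(N-1)$ carries the zero power of $\bs{A}$, while every other term lies in $\operatorname{im}\bs{A}$. Hence every reachable state lies in $\operatorname{im}\bs{A}+\mathcal{B}_s$, a finite union of subspaces each of dimension at most $\operatorname{rank}\bs{A}+s$. If $s<n-\operatorname{rank}\bs{A}$, this is a finite union of proper subspaces and cannot equal $\mathbb{R}^n$, so the system is not controllable. Together with the two trivial conditions, this gives necessity of $s\ge\max\{n-\operatorname{rank}\bs{A},1\}$.

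For sufficiency I would construct, for each power $j$, a support $\mathcal{S}_j$ with $|\mathcal{S}_j|\le s$ such that $\sum_j\operatorname{span}\{\bs{A}^j\bs{b}_i:i\in\mathcal{S}_j\}=\mathbb{R}^n$; since the Minkowski sum of subspaces is their linear span, this subspace lies inside $\mathcal{R}_N$. I would peel along the filtration $\mathbb{R}^n=\operatorname{im}\bs{A}^0\supseteq\operatorname{im}\bs{A}^1\supseteq\dots\supseteq\operatorname{im}\bs{A}^q=C$, where the chain stabilizes and $\bs{A}$ acts invertibly on the core $C$. At each level $\ell<q$, the quotient $\operatorname{im}\bs{A}^\ell/\operatorname{im}\bs{A}^{\ell+1}$ has dimension $\delta_{\ell+1}=\operatorname{rank}\bs{A}^\ell-\operatorname{rank}\bs{A}^{\ell+1}$ and is spanned by the classes of the columns of $\bs{A}^\ell\bs{B}$, so choosing $\mathcal{S}_\ell$ to realize a basis of this quotient costs $\delta_{\ell+1}$ channels at the power-$\ell$ step. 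On the core I would run a greedy rule that, for each power $j\ge q$, adds one column $\bs{A}^j\bs{b}_i$ whenever it enlarges the accumulated span. Assembling the pieces, a short induction shows the level contributions surject onto $\mathbb{R}^n/C$ and the greedy contributions recover $C$, so their sum is $\mathbb{R}^n$.

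The main obstacle is making the budget bookkeeping in sufficiency airtight, and two points carry the weight. First, each peeling step must fit the budget: this needs the classical fact that the rank drops $\delta_\ell$ are nonincreasing in $\ell$ (equivalently $\operatorname{rank}\bs{A}^{\ell-1}+\operatorname{rank}\bs{A}^{\ell+1}\le 2\operatorname{rank}\bs{A}^\ell$), so $\delta_{\ell+1}\le\delta_1=n-\operatorname{rank}\bs{A}\le s$ at every level. Second, the core greedy must not stall at a proper $W\subsetneq C$; this is exactly where invertibility of $\bs{A}$ on $C$ enters, since if no column at any power $j\ge J$ enlarged $W$, then $W\supseteq\operatorname{span}\{\bs{A}^j\bs{b}_i:j\ge J\}=\operatorname{im}\bs{A}^J=C$, a contradiction. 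With these two facts and the reduction to a subspace sum, the construction closes, and the bound on $N$ follows because only finitely many powers are used.
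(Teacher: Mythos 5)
The paper offers no proof of this proposition: it is imported verbatim from \cite[Theorem~1]{Joseph21tac-sparseControllability}, so there is no in-paper argument to compare against. Judged on its own, your proof is essentially correct and self-contained. The reduction of $s$-sparse reachability to the Minkowski sum $\mathcal{R}_N=\sum_{j}\bs{A}^j\mathcal{B}_s$, the necessity argument (every reachable state lies in the finite union of subspaces $\operatorname{im}(\bs{A})+\mathcal{B}_s$, each of dimension at most $\operatorname{rank}(\bs{A})+s$, which cannot cover $\mathbb{R}^n$ when $s<n-\operatorname{rank}(\bs{A})$), and the sufficiency construction that peels the filtration $\operatorname{im}(\bs{A}^0)\supseteq\operatorname{im}(\bs{A}^1)\supseteq\cdots$ via $\operatorname{im}(\bs{A}^\ell)=\operatorname{im}(\bs{A}^\ell\bs{B})+\operatorname{im}(\bs{A}^{\ell+1})$ and then runs a non-stalling greedy on the core where $\bs{A}$ acts invertibly, all fit together. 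This is the same circle of ideas the paper uses for its own results (the kernel filtration $\operatorname{ker}((\bs{A}^k)^{\top})$ in \Cref{lem:suff} is the orthogonal complement of your image filtration, and \Cref{app:kernel-condition} performs exactly your invertible-core/nilpotent-part split via the Jordan form), so your route is a coordinate-free rendition of the cited proof rather than a genuinely different one.

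One item needs fixing: the budget inequality is written with the wrong sign. You need the rank sequence $r_\ell=\operatorname{rank}(\bs{A}^\ell)$ to be convex, \ie $2r_\ell\le r_{\ell-1}+r_{\ell+1}$ (Frobenius), which is what makes the drops $\delta_\ell=r_{\ell-1}-r_\ell$ nonincreasing and hence $\delta_{\ell+1}\le\delta_1=n-\operatorname{rank}(\bs{A})\le s$ at every peeling level. Your parenthetical asserts $r_{\ell-1}+r_{\ell+1}\le 2r_\ell$, which is false in general: for a $3\times 3$ nilpotent $\bs{A}$ with ranks $r_0=3$, $r_1=1$, $r_2=0$ it reads $3\le 2$. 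Since the fact you actually invoke (nonincreasing drops) is correct, this is a typo rather than a gap, but it is the load-bearing step that keeps each level within the sparsity budget, so state it with the right orientation. Two smaller polish points: the conclusion of the necessity paragraph should read ``not $s$-sparse controllable'' rather than ``not controllable,'' and the core greedy deserves the one-line remark that whenever $W\subsetneq C$ some column among the next $n$ consecutive powers must enlarge $W$ (because $\sum_{j=J}^{J+n-1}\operatorname{im}(\bs{A}^j\bs{B})=C$), so the horizon $N$ is finite.
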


\review{The next result ensures $s$-sparse controllability with fixed active input channels, following an \emph{actuator schedule},
regardless of the initial and desired states.}

\review{
	\begin{definition}
		Let $[m]\doteq\{1,\ldots,m\}$ with $[0]\doteq\emptyset$.
		An \emph{actuator schedule} $\sch$ on $m$ inputs with horizon $h$ is an ordered tuple
		\begin{equation}
			\sch=(\sch[0],\dots,\sch[h-1]): \;\sch[k]\subseteq[m] \ \forall  k=0,\dots,h-1.
		\end{equation}
		We denote the set of all actuator schedules by 		$\mathcal{T}_m^h$.
	\end{definition}
}
In words,%
\extended{}{ an actuator schedule gathers the input channels that are active over the horizon.
Specifically,}
\review{the $i$th input channel $\u[i]{k}$ is active at time $k$ only if $i\in \sch[k]$.}
Also,
for given sparsity $s\in[m]$,
an \emph{$s$-sparse actuator schedule} fulfills the condition $|\sch[k]|\leq s$ for each time $k$,
\review{meaning that at most $s$ inputs are active at every time.}

\extended{Under}{If the $i$th entry of $\u{k}$ is zero, then the $i$th column of $\bs{B}$ does not affect the state $\x{k+1}$.
Therefore,
under}
the actuator schedule $\sch$, the actual state evolution is 
\begin{equation}\label{eq:systemmodel-sparse}
	\x{k+1} = \bs{A}\x{k} + \bs{B}_{\sch[k]}\u[{\sch[k]}]{k}, \quad k=0,\dots,h-1,
\end{equation}
where $\bs{B}_{\cl{I}}$ denotes the submatrix of $\bs{B}$ composed by columns with index in set $\cl{I}$ and $\u[{\sch[k]}]{k}$ stacks the elements of $\u{k}$ with index in $\cl{I}$.
We define the \emph{$\sch$-reachability matrix} as
\begin{equation}\label{eq:reach-mat}
	\reach{\sch}{h} \doteq \begin{bmatrix}
		\bs{A}^{h-1}\bs{B}_{\sch[0]} & \bs{A}^{h-2}\bs{B}_{\sch[1]} & \ldots & \bs{B}_{\sch[h-1]}
	\end{bmatrix}.
\end{equation}
\begin{prop}[\!\!\protect{\cite[Theorems~2.4 and 2.13]{joseph2024sparse}}]
\label{thm:sparse-schedule}
	If system~\eqref{eq:systemmodel} is $s$-sparse controllable and $h\ge h^*$,
	where
	 \begin{equation}
		\frac{n}{\min\{\rank{\bs{B}},s\}}\leq h^*\leq n-\min\{\rank{\bs{B}},s\}+1,
	\end{equation}
	 there exists an $s$-sparse schedule $\sch\in\cl{T}_m^h$ that makes system~\eqref{eq:systemmodel-sparse} $s$-sparse controllable,
	 \ie for which $\rankil{\reach{\sch}{h}} = n$\extended{.}{, where $\reach{\sch}{h}$ is defined in \eqref{eq:reach-mat}.}
\end{prop}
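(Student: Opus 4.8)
The plan is to translate the rank condition $\rankil{\reach{\sch}{h}}=n$ into the language of reachable subspaces and then build a schedule dimension by dimension. Writing $W_0=\{0\}$ and $W_{k+1}=\bs{A}W_k+\im{\bs{B}_{\sch[k]}}$, a short induction gives $\imil{\reach{\sch}{h}}=W_h$, so the goal becomes $\dim W_h=n$. Set $r\doteq\min\{\rank{\bs{B}},s\}$. I would first record a padding observation: if some schedule of length $h_0$ yields $\dim W_{h_0}=n$, then \emph{prepending} $h-h_0$ empty blocks $\sch[k]=\emptyset$ produces a length-$h$ schedule with the same terminal subspace, because the reindexed reachability matrix retains the original $h_0$ blocks unchanged and the empty blocks contribute nothing. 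Hence it suffices to exhibit one schedule of length $n-r+1$ achieving full rank; this simultaneously proves $h^{*}\le n-r+1$ and, together with padding, the existence claim for every $h\ge h^{*}$.

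The lower bound $h^{*}\ge n/r$ is a counting argument: each block $\bs{A}^{h-1-k}\bs{B}_{\sch[k]}$ of $\reach{\sch}{h}$ has at most $\min\{|\sch[k]|,\rank{\bs{B}}\}\le r$ independent columns, so $\rankil{\reach{\sch}{h}}\le hr$; requiring this to equal $n$ forces $h\ge n/r$. For the upper bound I would construct the schedule greedily so as to maintain a strictly increasing chain $W_1\subsetneq W_2\subsetneq\cdots$. The initial step chooses $\sch[0]$ to consist of $r$ linearly independent columns of $\bs{B}$, which is possible since $r\le\rank{\bs{B}}$ and $r\le s$, giving $\dim W_1=r$. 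The crux is the following growth lemma: whenever $\dim W_k<n$, there exists an $\sch[k]$ with $|\sch[k]|\le s$ such that $\dim W_{k+1}>\dim W_k$. Granting it, the dimension increases by at least one at each of the steps $k=1,\dots,n-r$, so $\dim W_{n-r+1}\ge r+(n-r)=n$, producing a schedule of the claimed length.

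The growth lemma is where the real difficulty lies, and it is precisely the point at which the naive greedy can fail. The obstruction is that the sparse recursion is not monotone: because only $\le s$ columns of $\bs{B}$ are re-injected at each step, and a different subset may be needed each time, $W_{k+1}$ need not contain $W_k$, unlike the full-input recursion $\cl{V}_{k+1}=\bs{A}\cl{V}_k+\im{\bs{B}}$ which always grows until it fills $\Real{n}$. I would prove the lemma by contradiction: if no admissible $\sch[k]$ enlarges the dimension, the failure can be packaged into a proper subspace that is $\bs{A}$-invariant and contains $\im{\bs{B}}$, contradicting controllability via the Popov--Belevitch--Hautus test. The hypothesis $s\ge n-\rank{\bs{A}}=\dim\keril{\bs{A}}$, guaranteed by \autoref{thm:sparse-controllability}, is essential here: it supplies enough input budget to simultaneously compensate for the directions of $W_k$ that collapse under the possibly singular map $\bs{A}$ and still contribute a genuinely new direction, which is exactly what keeps the chain strictly increasing until it reaches $\Real{n}$.
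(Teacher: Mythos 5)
The paper does not prove this proposition itself---it imports it from \cite{joseph2024sparse}---so your argument can only be judged on its own terms, and it has a genuine gap. The ``growth lemma'' on which the whole construction rests (whenever $\dim W_k<n$ there is an admissible $\sch[k]$ with $\dim W_{k+1}>\dim W_k$) is false, and your own initialization can steer you into a state where it fails. Take $n=3$, $\bs{A}=\operatorname{diag}(0,0,1)$, $\bs{B}=\bs{I}_3$, $s=n-\rank{\bs{A}}=2$, so $r=2$ and the system is $2$-sparse controllable. Your first step picks $r=2$ independent columns of $\bs{B}$; if it picks columns $1,2$ then $W_1=\operatorname{span}\{\bs{e}_1,\bs{e}_2\}$, $\bs{A}W_1=\{0\}$, and every admissible $\sch[1]$ gives $\dim W_2=\dim\im{\bs{B}_{\sch[1]}}\le 2=\dim W_1<n$. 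A feasible length-$2$ schedule does exist ($\sch[0]=\{3\}$, $\sch[1]=\{1,2\}$), but it is not reachable by ``grow the dimension by one at each step.'' Your proposed proof of the lemma by contradiction also collapses on this example: the failure cannot be packaged into a proper $\bs{A}$-invariant subspace containing $\im{\bs{B}}$, since here $\im{\bs{B}}=\Real{3}$ and no proper subspace contains it, so the PBH contradiction never materializes.

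The root cause is exactly the non-monotonicity you flagged and then set aside. In your forward recursion the block $\sch[k]$ is ultimately multiplied by $\bs{A}^{h-1-k}$, so directions injected early that lie in the generalized kernel of $\bs{A}$ are annihilated; the columns needed to cover $\keril{(\bs{A}^{j})^{\top}}$ must be scheduled in the \emph{last} $R$ steps, not wherever the greedy chain happens to want a new direction---this is precisely the failure mode of the naive greedy exhibited in \Cref{ex}. A correct proof splits $\bs{A}$ into its invertible and nilpotent Jordan parts, reserves the final $R$ blocks of the schedule for spanning the nested kernels (where the budget $s\ge n-\rank{\bs{A}}$ is exactly what is needed, cf.\ \Cref{lem:suff} and \eqref{eq:span_null}), and only then runs a one-new-direction-per-step argument on the invertible subsystem, where $\dim(\bs{J}W_k)=\dim W_k$ makes the chain genuinely monotone; this is the structure used in \Cref{app:kernel-condition}. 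Your padding observation and the counting bound $h\ge n/r$ are fine; the construction between them is the part that does not survive.
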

Using the above insights, we formulate our problem next.

\subsection{Sparse Control Design Problem Formulation}\label{sec:problem-formulation}

\extended{In light of \Cref{thm:sparse-schedule},
	we aim to find an $s$-sparse actuator schedule $\sch$ that makes the resulting system~\eqref{eq:systemmodel-sparse} controllable.
}{Designing sparse control inputs is challenging due to the nonconvex, combinatorial nature of sparsity constraints. However, as shown in \Cref{thm:sparse-schedule}, there exists a sparse actuator schedule that ensures controllability and is independent of the system's initial and final states. 
	We address sparsity by devising an actuator schedule that determines the support of the control input, thus removing the sparsity constraint. 
	During operation, 
	the control inputs can be designed using the least squares method.
	Thus, our goal is to find a schedule $\sch=(\sch[0],\sch[1],\ldots,\sch[h-1])\in\mathcal{T}_m^h$ such that $|\sch[k]|\le s$ and $\rankil{\reach{\sch}{h}} = n$, where $\reach{\sch}{h}$ is defined in \eqref{eq:reach-mat}.}

\extended{To}{A sparse actuator schedule satisfying controllability need not be unique, so we introduce a metric to} measure the control effort of a \extended{schedule $\sch$}{given schedule. To this end}, 
we consider the \emph{$\sch$-controllability Gramian} $\gram{\sch}{h} = (\reach{\sch}{h})^{\top}\reach{\sch}{h}$\extended{, which}{. We note that $\rankil{\gram{\sch}{h}}=\rankil{\reach{\sch}{h}}$. Also, $\gram{\sch}{h}$} \review{is used in the literature to quantify} the control energy required to reach a target state through~\eqref{eq:systemmodel-sparse} in $h$ steps~\cite{Baggio22ar-energyAwareControllability}.
\review{We denote the control metric by $\rho$ and list common energy-aware performance metrics in \autoref{table:metrics}.}
Therefore, our design problem is as follows.

\begin{table}
	\caption{Common energy-aware control  performance metrics~\cite{Baggio22ar-energyAwareControllability}.}
	\label{table:metrics}
	\begin{center}
		\small
		\begin{tabular}{lll}
			\toprule
			$\frac{1}{n}\tril{\gram{}{-1}}$	& average energy to reach a unit-norm state\\
			$(\lambda_{\text{min}}(\gram{}{}))^{-1}$	& maximal energy to reach a unit-norm state\\
			$\sqrt[n]{\det(\gram{}{})}$	& volume of ellipsoid reached with unit energy\\
			\bottomrule
		\end{tabular}
	\end{center}
\end{table}

\begin{prob}[Optimal sparse actuator schedule]\label{prob:design}
	Given system~\eqref{eq:systemmodel},
	a time horizon $h$,
	a sparsity level $s$,
	and a cost function $\rho$,
	find an $s$-sparse schedule $\sch^*$ as 	
	\begin{argmini!}
		{\sch\in\mathcal{T}_m^h}
		{\rho\lb\gram{\sch}{h}\rb\protect\label{eq:prob-design-obj}}
		{\label{eq:prob-design}}
		{\sch^*\in}
		\addConstraint{|\sch[k]|}{\le s}{\forall k\in\{0,\dots,h-1\}\protect\label{eq:prob-design-constr-sparsity}}
		\addConstraint{\rank{\gram{\sch}{h}}}{= n.\protect\label{eq:prob-design-constr-controllability}}
	\end{argmini!}
\end{prob}

In problem~\eqref{eq:prob-design},
the combinatorial constraint~\eqref{eq:prob-design-constr-sparsity} makes the schedule 
(\ie the control inputs) point-wise $s$-sparse,
and the rank constraint~\eqref{eq:prob-design-constr-controllability} ensures that the chosen schedule makes the system $s$-sparse controllable.%
\extended{%
}{ Problem~\eqref{eq:prob-design} is combinatorial in nature and can be solved by exhaustive search, whose computational complexity does not scale with the sparsity constraint $s$ and the horizon $h$.
In fact, the authors in~\cite{Siami21tac-actuatorScheduling} suggest that~\eqref{eq:prob-design} is NP-hard, although no proof is given.}%
\extended{\cref{thm:sparse-controllability,thm:sparse-schedule} imply that problem~\eqref{eq:prob-design} is feasible if the following conditions hold.
	\begin{ass}\label{rem:feasibility}
		The sparsity level and the time horizon respectively satisfy $s\geq \max\{n-\rank{\bs{A}},1\}$ and $h\geq h^*$.
	\end{ass}
}{Next, we provide the conditions under which the problem is feasible using \cref{thm:sparse-controllability,thm:sparse-schedule}.
	\begin{rem}\label{rem:feasibility}
		Given matrices $\bs{A}\in\Real{n\times n}$ and $\bs{B}\in\Real{n\times m}$,
		a time horizon $h$,
		and a sparsity constraint $s$,
		\cref{prob:design} is feasible if the system $(\bs{A},\bs{B})$ is controllable,
		the sparsity level satisfies $s\geq \max\{n-\rank{\bs{A}},1\}$,
		and the time horizon $h\geq h^*$.
\end{rem}}

In the following, we assume that \cref{rem:feasibility} holds and present design algorithms with polynomial complexity to solve \cref{prob:design} ensuring the constraints~\eqref{eq:prob-design-constr-sparsity}--\eqref{eq:prob-design-constr-controllability} are met.

\section{Design Algorithms}\label{sec:design}
 This section tackles \cref{prob:design} \review{via algorithms with tractable (polynomial) computational complexity}.
 First,
 we explain why the naive greedy algorithm may fail to meet the constraints in~\eqref{eq:prob-design}.
 In~\autoref{sec:feasible-set} we characterize sparse schedules that ensure controllability and,
 building on the analysis, 
 in~\autoref{sec:design-s-greedy} we devise our main design algorithm with formal controllability guarantees.
 Finally,
 in \autoref{sec:design-mcmc} we propose a stochastic optimization approach to improve performance.

\subsection{Drawback of the Naive Greedy Algorithm}\label{sec:design-greedy}

A classic approach to reduce cost~\eqref{eq:prob-design-obj} subject to a budget constraint on the inputs is greedy selection,
which we refer to as a naive greedy algorithm~\cite{Baggio22ar-energyAwareControllability,kondapi2024sparseactuatorschedulingdiscretetime}.%
\extended{}{ It selects columns from the reachability matrix $\reach{}{h}$ until the budget is exhausted or the cost cannot be reduced.
This method typically performs well in practice and enjoys quantifiable suboptimality guarantees if $\rho$ is supermodular~\cite{Summers16tcns-submodularity,Olshevsky18tcns-nonSupermodularity}.
Also, the seminal work~\cite{Olshevsky14tcns-minimalControllability} shows that greedy selection enjoys the lowest computational complexity required to find the tightest approximate minimal number of control inputs that ensure controllability in polynomial time.}%
However,
imposing the point-wise $s$-sparsity constraint in~\eqref{eq:prob-design-constr-sparsity} to the greedy selection may yield an uncontrollable system, as in the example below.

\begin{ex}\label{ex}
	We choose a system with $n=5$, $m=7$ where
	\begin{equation}\label{eq:ex}
		\bs{A} = \begin{bmatrix}
			0 & 1 & 0 & 0 & 0\\
			0 & 0 & 0 & 1 & 0\\
			0 & 0 & 1 & 0 & 0 \\
			0 & 0 & 0 & 0 & 1 \\
			0 & 0 & 0 & 0 & 0
		\end{bmatrix},
		\bs{B} = \begin{bmatrix}
			0	&	0	&	1	&	0	&	0	&	0	&	1\\
			0	&	0	&	1	&	0	&	0	&	1	&	0\\
			1 	&	0	&	0	&	0	&	1	&	0	&	1\\
			1	&	1	&	0	&	0	&	0	&	0	&	1\\
			0	&	0	&	0	&	1	&	0	&	0	&	0
		\end{bmatrix}.
	\end{equation}
	The system is $s$-sparse controllable with $s=n-\rankil{\bs{A}}=1$.
	We choose time horizon $h = n=5$ and cost function $\rho(\cdot) = \tr{(\cdot)^{-1}}$.
	To avoid singular matrix inversion, the cost is computed as $\operatorname{Tr}((\gram{\sch}{h})^{-1}+\epsilon\bs{I})$ with a small slack $\epsilon>0$~\cite{Siami21tac-actuatorScheduling,kondapi2024sparseactuatorschedulingdiscretetime}.%
	\extended{}{ To trade robustness for accuracy, we start with a default slack $\epsilon = 10^{-10}$ that is progressively increased till the cost can be computed.}%
	In the first iteration, the last column of the reachability matrix $\reach{}{h}$ gives the lowest cost, so the algorithm selects $\sch[4] = 7$. 
	Due to the sparsity constraint, subsequent iterations restrict the greedy search to the first $28$ columns of $\reach{}{h}$.
	However, the last row of these columns is entirely zero. 
	Thus, the last row of $\reach{\sch}{h}$	has only zeros,
	yielding an uncontrollable system and failure of the naive greedy algorithm.
\end{ex}

In short, the naive greedy algorithm need not output a feasible solution of the control design problem~\eqref{eq:prob-design}. 
We next examine the feasible set of~\eqref{eq:prob-design} to guide a greedy selection.

\subsection{Characterizing Feasible Actuator Schedules}
\label{sec:feasible-set}

We start with a necessary condition for feasibility.

\begin{lemma}
	Let $\sch$ be a feasible solution of \cref{prob:design} satisfying \cref{rem:feasibility}.  
	Then, for $k=1,\ldots,h-1$,
	it holds
	\begin{equation}\label{eq:rank_condition}
		\rank{\begin{bmatrix}
				\bs{A}^{k}& \bs{A}^{k-1}\bs{B}_{\sch[h-k]} & \ldots & \bs{B}_{\sch[h-1]}
		\end{bmatrix}} = n. 
	\end{equation}
\end{lemma}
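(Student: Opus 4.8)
The plan is to exploit the feasibility constraint $\rankil{\reach{\sch}{h}}=n$ directly. Since the full $\sch$-reachability matrix already has full row rank $n$, it suffices to show that its column space is contained in that of the matrix in~\eqref{eq:rank_condition}, which I denote by $\bs{M}_k$. Establishing $\im{\reach{\sch}{h}}\subseteq\im{\bs{M}_k}$ immediately yields $\rankil{\bs{M}_k}\ge\rankil{\reach{\sch}{h}}=n$, and since $\bs{M}_k$ has only $n$ rows we conclude $\rankil{\bs{M}_k}=n$.

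First I would split the reachability matrix~\eqref{eq:reach-mat} into its first $h-k$ blocks and its last $k$ blocks. The last $k$ blocks are exactly $\bs{A}^{k-1}\bs{B}_{\sch[h-k]},\ldots,\bs{B}_{\sch[h-1]}$, which appear verbatim as the trailing columns of $\bs{M}_k$; hence their column space is trivially a subspace of $\im{\bs{M}_k}$. For the first $h-k$ blocks, the generic block is $\bs{A}^{h-1-j}\bs{B}_{\sch[j]}$ with $0\le j\le h-k-1$, so the exponent satisfies $h-1-j\ge k$ and the block factors as $\bs{A}^{k}\bigl(\bs{A}^{h-1-j-k}\bs{B}_{\sch[j]}\bigr)$. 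Consequently every column of the first part lies in $\im{\bs{A}^{k}}$, which is the column space of the leading block of $\bs{M}_k$ and therefore contained in $\im{\bs{M}_k}$.

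Combining the two observations gives $\im{\reach{\sch}{h}}\subseteq\im{\bs{M}_k}$, and the rank conclusion follows as above for every $k\in\{1,\ldots,h-1\}$. I do not anticipate a genuine obstacle: the statement is essentially a restatement of the full-rank condition after a convenient block split, and the proof should be a few lines. The only point requiring care is the index bookkeeping—checking that the trailing blocks of $\reach{\sch}{h}$ indexed by $j=h-k,\ldots,h-1$ coincide with the trailing columns of $\bs{M}_k$, and that the exponent inequality $h-1-j\ge k$ holds for all leading blocks so that the factorization through $\bs{A}^{k}$ is valid. Finally, I note that only the controllability constraint~\eqref{eq:prob-design-constr-controllability} is used; the sparsity hypothesis in \cref{rem:feasibility} enters solely through the feasibility of $\sch$.
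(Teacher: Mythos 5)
Your proof is correct and follows essentially the same route as the paper: both deduce $\rankil{\bs{M}_k}\ge n$ from the feasibility condition $\rankil{\reach{\sch}{h}}=n$ and the containment $\im{\reach{\sch}{h}}\subseteq\im{\bs{M}_k}$, then conclude by the trivial upper bound $\rankil{\bs{M}_k}\le n$. The paper states the rank inequality without elaboration, whereas you spell out the block split and the factorization of the leading blocks through $\bs{A}^{k}$; your index bookkeeping is accurate.
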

\begin{proof}
	From the rank condition, we deduce that
	\begin{equation}
		\begin{aligned}
			n&=\rank{\begin{bmatrix}
					\bs{A}^{h-1}\bs{B}_{\sch[0]} & \ldots & \bs{B}_{\sch[h-1]}
			\end{bmatrix}}\\
			&\leq \rank{
				\begin{bmatrix}\bs{A}^{k}& \bs{A}^{k-1}\bs{B}_{\sch[h-k]} & \ldots & \bs{B}_{\sch[h-1]}
			\end{bmatrix}} \leq n,
		\end{aligned}
	\end{equation} 
 and the equality~\eqref{eq:rank_condition} immediately follows by comparison.
\end{proof}

From the relation~\eqref{eq:rank_condition} with $k=1$, we derive
\begin{equation}
	\bb{R}^n=\im{\begin{bmatrix}
			\bs{A} & \!\!\!\!\bs{B}_{\sch[h-1]}
	\end{bmatrix}}= \im{\bs{A}}\oplus\im{\bs{B}_{\sch[h-1]}},
\end{equation}
where $\im{\cdot}$ denotes the column space and $\oplus$ denotes the sum of subspaces.
As $
	\bb{R}^n=\im{\bs{A}}  \oplus \keril{\bs{A}^{\top}}  = \im{\bs{A}}\oplus\im{\bs{B}_{\sch[h-1]}}$,
where $\ker{\cdot}$ denotes the null space, we get
\begin{equation}\label{eq:nullA}
	\ker{\bs{A}^{\top}}  = \proj{\im{\bs{B}_{\sch[h-1]}}}{\ker{\bs{A}^{\top}}},
\end{equation}
where $\proj{\cl{D}}{\cl{C}}$ denotes the projection of the (sub)space $\cl{D}$ onto the subspace $\cl{C}$. 
For $k=2$,
we write the relation~\eqref{eq:rank_condition} as
\begin{align}\label{eq:nullA2}
	\begin{split}
		\ker{\lb\bs{A}^{2}\rb^\top}  = &\proj{\im{\bs{A}\bs{B}_{\sch[{h-2}]}}}{\ker{\lb\bs{A}^{2}\rb^\top}} \\
		&\oplus \proj{\im{\bs{B}_{\sch[{h-1}]}}}{\ker{\lb\bs{A}^{2}\rb^\top}}.
	\end{split}
\end{align}
Since $\keril{\bs{A}^{\top}}\subseteq\keril{(\bs{A}^{2})^{\top}}$, \eqref{eq:nullA} and \eqref{eq:nullA2} jointly yields
\begin{multline}
	\ker{\lb\bs{A}^{2}\rb^\top} \ominus \ker{\bs{A}^\top} = 
	\\
	\begin{aligned}
		&\proj{\im{\bs{A}\bs{B}_{\sch[{h-2}]}}}{\ker{\lb\bs{A}^{2}\rb^{\top}} {\ominus \ker{\bs{A}^{\top}}}} \\
		&\oplus \proj{\im{\bs{B}_{\sch[{h-1}]}}}{\ker{\lb\bs{A}^{2}\rb^{\top}} {\ominus \ker{\bs{A}^{\top}}}},
	\end{aligned}
\end{multline}
where
for any two subspaces $\cl{C}\subseteq\cl{D}$,
$\cl{D}\ominus\cl{C}\subseteq\cl{D}$ denotes the orthogonal complement to $\cl{C}$ in $\cl{D}$. 
The dimension of the subspace $\keril{(\bs{A}^{2})^{\top}} {\ominus \keril{\bs{A}^{\top}}}$ is the difference between the dimensions of $\keril{(\bs{A}^{2})^{\top}}$ and $\keril{\bs{A}^{\top}}$, given by 
\begin{equation}
	(n-\rank{\bs{A}^2})-(n-\rank{\bs{A}})
	\leq n-\rank{\bs{A}}\leq s
\end{equation}
where we use the Sylvester rank inequality.
Hence,
regardless of the channels in $\sch[h-1]$,
we can find a set $\sch[{h-2}]$ such that 
\begin{multline}
	\ker{\lb\bs{A}^{2}\rb^\top} \ominus \ker{\bs{A}^\top} = \\
	\proj{\im{\bs{A}\bs{B}_{\sch[{h-2}]}}}{\ker{\lb\bs{A}^{2}\rb^{\top}} {\ominus \ker{\bs{A}^{\top}}}}.
\end{multline}
Extending the same idea, 
we obtain sufficient conditions for controllability.
To this aim,
we define
\begin{equation}\label{eq:R}
	R \doteq \min\{ k\geq 0:  \rankil{\bs{A}^{k}}=\rankil{\bs{A}^{k+1}}\}.
\end{equation}

\begin{lemma}\label{lem:suff}
    Let $s$ and $h$ satisfy \Cref{rem:feasibility}.
	Then, for all $k\in[R]$, 
	there exists an index set $\sch[h-k]$ such that 
	\begin{multline}\label{eq:suff_feasibility}
		\ker{\lb\bs{A}^{k}\rb^\top} \ominus \ker{\lb\bs{A}^{k-1}\rb^\top} = \\
		\proj{\im{\bs{A}^{k-1}\bs{B}_{\sch[{h-k}]}}}{\ker{\lb\bs{A}^{k}\rb^\top} \ominus \ker{\lb\bs{A}^{k-1}\rb^\top}}.
	\end{multline}
	Further, if $R>0$,
	the sets satisfy
	\begin{multline}\label{eq:span_null}
		\ker{\lb\bs{A}^R\rb^{\top}} = \operatorname{proj}_{\ker{\lb\bs{A}^R\rb^{\top}}}\Big\{ \operatorname{Col}\Big\{
		\big[
		\bs{A}^{R-1}\bs{B}_{\sch[h-R]}
		\\
		\begin{matrix}\bs{A}^{R-2}\bs{B}_{\sch[h-R+1]} & \ldots & \bs{B}_{\sch[h-1]}\end{matrix}\big]\Big\}\Big\}.
	\end{multline}
\end{lemma}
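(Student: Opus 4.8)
The plan is to construct the index sets $\sch[{h-k}]$ one block at a time for $k=1,\dots,R$, and then to verify that the whole collection jointly spans $\ker{\lb\bs{A}^R\rb^\top}$. Throughout I write $V_k\doteq\ker{\lb\bs{A}^k\rb^\top}\ominus\ker{\lb\bs{A}^{k-1}\rb^\top}$ for the ``new'' portion of the increasing kernel chain $\{0\}\subseteq\ker{\bs{A}^\top}\subseteq\ker{\lb\bs{A}^2\rb^\top}\subseteq\cdots$, and let $P_{V_k}$ denote the orthogonal projector onto $V_k$. The first step is the dimension count $\dim V_k=\rank{\bs{A}^{k-1}}-\rank{\bs{A}^k}$, which is non-increasing in $k$ by the Frobenius rank inequality (the case $k=2$ being the Sylvester inequality used above); hence $\dim V_k\le n-\rank{\bs{A}}\le s$ for every $k$, where the last bound is \Cref{rem:feasibility}. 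The same assumption, through $h\ge h^*$, also ensures that the slots $h-R,\dots,h-1$ are valid, so the indexing is well posed.

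For the existence claim \eqref{eq:suff_feasibility} I would first prove the full-$\bs{B}$ version $\proj{\im{\bs{A}^{k-1}\bs{B}}}{V_k}=V_k$ and then thin out the columns. Since $P_{V_k}$ is self-adjoint with range $V_k$, it suffices to show that no nonzero $v\in V_k$ is orthogonal to $\im{\bs{A}^{k-1}\bs{B}}$. Fix such a $v$ and put $w\doteq\lb\bs{A}^{k-1}\rb^\top v$. Orthogonality of $v\in V_k\setminus\{0\}$ to $\ker{\lb\bs{A}^{k-1}\rb^\top}$ forces $w\neq 0$, while $\lb\bs{A}^k\rb^\top v=0$ gives $\bs{A}^\top w=0$, so $w\in\ker{\bs{A}^\top}\setminus\{0\}$. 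If moreover $v\perp\im{\bs{A}^{k-1}\bs{B}}$, then $w^\top\bs{B}=v^\top\bs{A}^{k-1}\bs{B}=0$, i.e.\ $w\in\ker{\bs{B}^\top}$; thus $w$ is a nonzero element of $\ker{\bs{A}^\top}\cap\ker{\bs{B}^\top}$, contradicting $\rankil{\begin{bmatrix}\bs{A}&\bs{B}\end{bmatrix}}=n$ (the PBH test at the eigenvalue $0$, valid since the system is controllable by \Cref{rem:feasibility}). With the full-$\bs{B}$ identity in hand, linearity of $P_{V_k}$ lets me select at most $\dim V_k\le s$ columns of $\bs{B}$ whose projections form a basis of $V_k$; collecting their indices in $\sch[{h-k}]$ yields \eqref{eq:suff_feasibility} together with $|\sch[{h-k}]|\le s$.

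For the joint statement \eqref{eq:span_null} I would reuse the very sets $\sch[{h-1}],\dots,\sch[{h-R}]$ just built, together with the orthogonal decomposition $\ker{\lb\bs{A}^R\rb^\top}=V_1\oplus\cdots\oplus V_R$ obtained by telescoping the kernel chain. Writing $M_j\doteq\bs{A}^{j-1}\bs{B}_{\sch[{h-j}]}$, the crucial observation is a triangular vanishing: for $i<j$ every $v\in V_i\subseteq\ker{\lb\bs{A}^i\rb^\top}$ obeys $\lb\bs{A}^{j-1}\rb^\top v=0$ because $j-1\ge i$, so $\proj{\im{M_j}}{V_i}=0$. Hence $\proj{\im{M_j}}{\ker{\lb\bs{A}^R\rb^\top}}$ lands in $V_j\oplus\cdots\oplus V_R$, and its $V_j$-component already spans $V_j$ by \eqref{eq:suff_feasibility}. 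A downward induction on $j=R,R-1,\dots,1$ then closes the argument: $V_R$ is spanned by $\proj{\im{M_R}}{\ker{\lb\bs{A}^R\rb^\top}}$ alone, and at stage $j$ the higher summands $V_{j+1}\oplus\cdots\oplus V_R$ already lie in the projected span, so the $V_j$-component of $\proj{\im{M_j}}{\ker{\lb\bs{A}^R\rb^\top}}$ can be isolated to add $V_j$. Summing over $j$ recovers all of $\ker{\lb\bs{A}^R\rb^\top}$, which is precisely \eqref{eq:span_null}.

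The step I expect to be the main obstacle is this last joint argument: I must ensure that the sets produced one coordinate block at a time in \eqref{eq:suff_feasibility} remain mutually compatible, which is exactly what the triangular vanishing $\proj{\im{M_j}}{V_i}=0$ for $i<j$ provides, and then carry out the book-keeping of the downward induction so that the off-diagonal components living in $V_{j+1},\dots,V_R$ are absorbed into the already-spanned part rather than obstructing the span.
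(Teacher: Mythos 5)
Your proof is correct, and its skeleton coincides with the paper's: decompose $\ker{\lb\bs{A}^R\rb^\top}$ into the orthogonal increments $V_k=\ker{\lb\bs{A}^{k}\rb^\top}\ominus\ker{\lb\bs{A}^{k-1}\rb^\top}$, bound $\dim V_k=\rank{\bs{A}^{k-1}}-\rank{\bs{A}^{k}}\leq n-\rank{\bs{A}}\leq s$ by a rank inequality, show each $V_k$ is spanned by the projections of at most $s$ columns of $\bs{A}^{k-1}\bs{B}$, and combine the blocks by exploiting that $\im{\bs{A}^{j-1}\bs{B}_{\sch[h-j]}}\perp V_i$ for $i<j$. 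Two sub-steps are executed by genuinely different means. For the spanning step, the paper writes $V_k=\im{\bs{K}^{(k)}\bs{A}^{k-1}}$ with $\bs{K}^{(k)}=\bs{I}-\bs{A}^{k}(\bs{A}^{k})^{\dagger}$ and argues $\im{\bs{K}^{(k)}\bs{A}^{k-1}}=\im{\bs{K}^{(k)}\bs{A}^{k-1}\bs{B}}$ by inserting the full reachability matrix and noting that $\bs{K}^{(k)}\bs{A}^{k-1}$ annihilates every positive power of $\bs{A}$; you instead run a duality argument, mapping a hypothetical $v\in V_k\setminus\{0\}$ orthogonal to $\im{\bs{A}^{k-1}\bs{B}}$ to a nonzero $w=(\bs{A}^{k-1})^\top v\in\keril{\bs{A}^\top}\cap\keril{\bs{B}^\top}$ and contradicting the PBH condition at the eigenvalue $0$. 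Both rest on controllability of $(\bs{A},\bs{B})$; yours is more self-contained (no reachability matrix needed), while the paper's version yields the spanning identity for all $k$ in one line. For the joint claim \eqref{eq:span_null}, the paper constructs, for an arbitrary $\bs{z}\in\keril{(\bs{A}^R)^\top}$, explicit coefficients $\bs{v}(h-k)$ and residuals $\bar{\bs{z}}(k)$ by recursing forward through the kernel chain and then applies $\bs{K}^{(R)}$; your downward induction on the block-triangular array of projections $\projil{\im{\bs{A}^{j-1}\bs{B}_{\sch[h-j]}}}{V_i}$ is the same triangularity expressed non-constructively and is, if anything, cleaner to verify, though the paper's constructive version also exhibits the inputs realizing a given kernel direction, which feeds naturally into the algorithmic discussion.
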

\begin{proof}
	See~\Cref{app:suff}.
\end{proof}

If $\bs{A}$ is invertible,
then $R=0$ and the result is trivial.
\cref{lem:suff} leads to a sufficient condition for feasibility, as established by the following result. 

\begin{thm}\label{thm:kernel-condition}
	Consider system~\eqref{eq:systemmodel} and sparsity level $s$ and time horizon $h$ satisfying \Cref{rem:feasibility}.
	Then, there exists a feasible solution $\cl{S}$ of \cref{prob:design} that satisfies \eqref{eq:suff_feasibility} for $k\in[R]$.
\end{thm}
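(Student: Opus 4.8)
The plan is to keep the last $R$ schedule entries $\sch[h-R],\dots,\sch[h-1]$ supplied by \cref{lem:suff} and to complete the schedule by designing the first $h-R$ entries so that $\rankil{\reach{\sch}{h}}=n$. I would organise the argument around the orthogonal splitting $\bb{R}^n = V\oplus W$, where $V\doteq\im{\bs{A}^R}$ and $W\doteq\keril{(\bs{A}^{R})^{\top}}=V^{\perp}$, with $r\doteq\rankil{\bs{A}^R}=\dim V$. By the definition~\eqref{eq:R} of $R$, the subspace $V$ is $\bs{A}$-invariant and $\bs{A}$ acts \emph{invertibly} on it, since $\rankil{\bs{A}^{R+1}}=\rankil{\bs{A}^{R}}$ forces $\bs{A}V=\im{\bs{A}^{R+1}}=V$. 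Equation~\eqref{eq:span_null} of \cref{lem:suff} asserts exactly that the columns of the fixed late blocks project onto all of $W$; hence the only thing left to arrange is that the early blocks, chosen $s$-sparse, span the complementary subspace $V$. (When $R=0$ the matrix $\bs{A}$ is invertible, $W=\{0\}$, and the claim reduces directly to \cref{thm:sparse-schedule}.)

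To span $V$, note that each early column $\bs{A}^{i}\bs{B}_{\sch[h-1-i]}$ has exponent $i\ge R$ and therefore factors as $(\bs{A}|_{V})^{\,i-R}(\bs{A}^{R}\bs{B})_{\sch[h-1-i]}\in V$. This identifies the early-block design with an $s$-sparse scheduling problem for the \emph{reduced} system $(\bs{A}|_{V},\,\bs{A}^{R}\bs{B})$ on the $r$-dimensional space $V$. I would check that this reduced system is controllable, because $\sum_{i\ge0}\im{(\bs{A}|_{V})^{i}\bs{A}^{R}\bs{B}}=\sum_{i\ge R}\im{\bs{A}^{i}\bs{B}}=\bs{A}^{R}\bb{R}^{n}=V$ by controllability of $(\bs{A},\bs{B})$. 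Since $\bs{A}|_{V}$ is invertible, the analogue of~\eqref{eq:R} for $\bs{A}|_{V}$ equals $0$, so by \cref{thm:sparse-controllability} the reduced system is $s$-sparse controllable whenever $s\ge1$, which holds under \cref{rem:feasibility}. Applying \cref{thm:sparse-schedule} to the reduced system then produces an $s$-sparse schedule whose reachability matrix has rank $r$, i.e.\ spans $V$; relabelling its entries through the factorisation above fixes $\sch[0],\dots,\sch[h-R-1]$, with any unused early times set to $\emptyset$.

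Finally, I would combine the two parts. If the early blocks span $V$ and the late blocks project onto all of $W=V^{\perp}$, then for every $w\in W$ there is a late-block vector $x=w+v$ with $v\in V$, so $w=x-v$ lies in the joint column space; together with $V\subseteq\im{\reach{\sch}{h}}$ this gives $\im{\reach{\sch}{h}}\supseteq V+W=\bb{R}^{n}$, so $\sch$ is feasible, and it satisfies~\eqref{eq:suff_feasibility} for $k\in[R]$ by construction via \cref{lem:suff}. The step I expect to be the main obstacle is the horizon accounting: I must certify that the $h-R$ available early times are enough to realise the reduced-system schedule, i.e.\ that $h-R$ is at least the scheduling horizon guaranteed by \cref{thm:sparse-schedule} for the $r$-dimensional reduced system. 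I would derive this from $h\ge h^{*}$ together with $R\le n-r$ (the nullity of $\bs{A}^{k}$ increases strictly until index $R$, so $\mathrm{nul}(\bs{A}^{R})=n-r\ge R$), but matching the upper and lower bounds on the two thresholds $h^{*}$ requires care and is the genuinely delicate point of the proof.
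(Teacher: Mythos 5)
Your proposal is correct and takes essentially the same route as the paper's proof: both split $\bb{R}^n=\im{\bs{A}^R}\oplus\keril{(\bs{A}^R)^{\top}}$, cover the kernel part with \Cref{lem:suff}, and reduce the image part to an $s$-sparse scheduling problem for an auxiliary controllable system with invertible dynamics to which \Cref{thm:sparse-controllability,thm:sparse-schedule} apply for any $s\ge1$ (the paper realizes this auxiliary system as $(\bs{J},\bs{P}^{(1)}\bs{B})$ via the real Jordan form, which is similar to your $(\bs{A}|_{V},\bs{A}^{R}\bs{B})$). The horizon accounting you flag as the delicate step is dispatched in the paper exactly along the lines you sketch: the reduced system has dimension $J\le n-R$, so $h\ge n$ yields $h-R\ge n-R\ge J$ and the early blocks fit in the first $h-R$ time steps.
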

\begin{proof}
	See \Cref{app:kernel-condition}.
\end{proof}

\extended{}{\cref{thm:kernel-condition} provides two key insights for design.  
	One, it shows that the sparse controllability index, \ie the schedule length, is at least $R$. Second, it establishes the structure of the schedule $\cl{S}_k$ for $k=h-R,\ldots,h-1$.}
Based on \cref{thm:kernel-condition}, we devise \review{an improved greedy algorithm to find a schedule with formal controllability guarantee.}

\subsection{Improved $s$-Sparse Greedy Algorithm}\label{sec:design-s-greedy}

\begin{algorithm}[t]
	\caption{$s$-sparse greedy selection}
	\label{alg:sparse-greedy}
	\KwIn{Matrices $\bs{A},\bs{B}$, sparsity $s$, horizon $h$, cost $\rho$.}
	\KwOut{Schedule $\mathcal{S}$.}
	$\mathcal{S} \leftarrow [[m]] * h$ \tcp*{initialize full schedule}\label{alg:s-greedy-init}
	$\texttt{rk}_W \leftarrow 0$ \tcp*{rank of controllability Gramian}
	$\bs{C} \leftarrow [ \ ]$ \tcp*{column space of controllability Gramian}
	$\texttt{cand} \leftarrow  [\emptyset] * h$ \tcp*{channels to be selected later}
	\For(\tcp*[f]{input at $k=0$ is unconstrained}){$k = 1,\dots,h-1$\label{alg:s-greedy-loop-kernel}}{
		$\sch[k]\leftarrow\emptyset$ \tcp*{select necessary channels at time $k$}
		\If{$\texttt{rk}_W < n$}{
			$\cl{K}_k \leftarrow \keril{(\bs{A}^{h-k})^\top} \ominus \keril{(\bs{A}^{h-1-k})^\top}$\;\label{alg:s-greedy-kernel}
			$\sch[k] \leftarrow \texttt{greedy\_k}(\{j : \bs{B}_j \not\perp \cl{K}_k\},k,\sch,\epsilon)$\;\label{alg:s-greedy-kernel-greedy}
			$\texttt{rk}_W \leftarrow \texttt{rk}_W + |\sch[k]| $\;
			$\bs{C} \leftarrow [\bs{A}^{h-1-k}\bs{B}_{\sch[k]} \; \bs{C}]$\;
		}
		\If{$|\sch[k]|<s$}{
			$\texttt{cand}[k] \leftarrow [m] \setminus \sch[k]$\;\label{alg:s-greedy-loop-kernel-end}
		}
	}
	$\sch[0] \leftarrow \emptyset$; $\texttt{cand}[0] \leftarrow [m]$ \tcp*{reset schedule at time $0$}
	\For(\tcp*[f]{ensure controllability}){\texttt{i} = $1,\dots,n-\texttt{rk}_W$\label{alg:s-greedy-increase-rank}}{
		$\texttt{cand}, \mathcal{S} \leftarrow$ \texttt{greedy}$(\texttt{cand}, \mathcal{S}, \epsilon, \texttt{rk})$\;\label{alg:s-greedy-increase-rank-end}
	}
	\While(\tcp*[f]{decrease cost}){$\exists k : |\sch[k]| < s$\label{alg:s-greedy-drop-cost}}{
		$\texttt{cand}, \mathcal{S} \leftarrow$ \texttt{greedy}$(\texttt{cand}, \mathcal{S},0)$\;
		\If{$\sch$ doesn't change}{
			\textbf{break}\;\label{alg:s-greedy-drop-cost-end}
		}
	}
	\textbf{return} $\mathcal{S}$.
\end{algorithm}

We outline our selection procedure in \cref{alg:sparse-greedy}.
The schedule is initialized full,
\ie all input channels scheduled at all times (\cref{alg:s-greedy-init}).
This allows us to \review{reduce the risk of dealing with singular $\sch$-controllability Gramian and thus to compute the cost function more robustly in the first phase of the algorithm},
which ensures that all channels that are necessary for controllability are scheduled,
This phase is implemented in the ``for'' loop at \cref{alg:s-greedy-loop-kernel}.
Specifically,
for each time step $k>1$,
\cref{alg:s-greedy-kernel} computes the subspace $\cl{K}_k$ that has to be spanned by input channels active at time $k$,
and \cref{alg:s-greedy-kernel-greedy} greedily selects the best channels among the candidates.
For example,
at time $k=h-1$,
$\cl{K}_{h-1}=\keril{\bs{A}^\top}$ and \cref{alg:s-greedy-kernel-greedy} populates $\sch[h-1]$ until condition~\eqref{eq:nullA} is met.
The feasibility of this selection is formally supported by \cref{thm:kernel-condition},
which states that one can find indices $\sch[k]$ such that $\bs{B}_{\sch[k]}$ spans $\cl{K}_k$ for every time $k$.

The selection at \cref{alg:s-greedy-kernel-greedy} of \cref{alg:sparse-greedy} is executed by the subroutine \texttt{greedy\_k} outlined in \cref{alg:greedy-k},
where we define the ``contribution'' to the controllability Gramian of channel $c$ scheduled at time $k$ as $\bs{\phi}_c^k\doteq (\bs{A}^{h-1-k}\bs{B}_c)^\top\bs{A}^{h-1-k}\bs{B}_c$.
Here,
after a channel is selected (\cref{alg:greedy-k-selection}),
the candidate channels that are not independent of those already selected are removed (\cref{alg:greedy-k-if,alg:greedy-k-if-end}),
guaranteeing that the schedule $\sch[k]$ spans one more direction of $\cl{K}_k$ after each iteration of the ``while'' loop of \cref{alg:greedy-k} and eventually spans all of $\cl{K}_k$ so as to fulfill~\eqref{eq:rank_condition}.
By the initialization of $\sch$,
the parameter $\epsilon$ in \cref{alg:greedy-k} can be zero if $\cl{K}_k$ has dimension one,
but it must be positive otherwise to avoid a singular matrix in $\rho$.

\begin{algorithm}[t]
	\caption{Subroutine \texttt{greedy\_k}}
	\label{alg:greedy-k}
	\KwIn{Channels \texttt{cand}, step $k$, schedule $\sch$, $\epsilon\ge0$.}
	\KwOut{Updated schedule $\sch[k]$ at time step $k$.}
	\While{$\texttt{cand}\neq\emptyset$}{
		$c^* \leftarrow  \argmin_{c \in \texttt{cand}} \rho(\gram{\sch}{h} + \bs{\phi}_c^k + \epsilon \bs{I})$\;
		$\sch[k] \leftarrow \sch[k] \cup \{c^*\}$; $\texttt{cand} \leftarrow \texttt{cand} \setminus \{c^*\}$\;\label{alg:greedy-k-selection}
		\ForEach{$c\in\texttt{cand}$}{
			\If{$\projil{\bs{A}^{h-1-k}\bs{B}_{c}}{\cl{K}_k} \parallel \projil{\bs{A}^{h-1-k}\bs{B}_{\sch[k]}}{\cl{K}_k}$\label{alg:greedy-k-if}}{
				$\texttt{cand} \leftarrow \texttt{cand} \setminus \{c\}$\;\label{alg:greedy-k-if-end}
			}
		}
	}
	\textbf{return} $\sch[k]$.
\end{algorithm}

\begin{algorithm}[b]
	\caption{Subroutine \texttt{greedy}}
	\label{alg:greedy}
	\KwIn{Channels \texttt{cand}, schedule $\sch$, $\epsilon\ge0$, flag \texttt{rk}.}
	\KwOut{Updated \texttt{cand}, $\sch$.}
	$ k^*, c^* \leftarrow  \argmin_{k : |\sch[k]| < s,c \in \texttt{cand}[k]} \rho(\gram{\sch}{h} + \bs{\phi}_c^k + \epsilon \bs{I})$\;\label{alg:greedy-selection}
	\If{\textbf{not} $\!\!\!$ \texttt{rk} \textbf{and} $\rho(\gram{\sch}{h} + \bs{\phi}_{c^*}^{k^*} + \epsilon \bs{I}) \!\ge\! \rho(\gram{\sch}{h} + \epsilon \bs{I})$}{
		\textbf{return} \texttt{cand}, $\sch$\;
	}
	$\sch[k^*] \leftarrow \sch[k^*] \cup \{c^*\}$; $\texttt{cand}[k^*] \leftarrow \texttt{cand}[k^*] \setminus \{c^*\}$\;
	\If{\texttt{rk}\label{alg:greedy-rk}}{
		$\bs{C} \leftarrow [\bs{A}^{h-1-k^*}\bs{B}_{c^*} \; \bs{C}]$\;
		\ForEach{$k : |\sch[k]| < s$, $c\in\texttt{cand}[k]$}{
			\If{$\rankil{[\bs{A}^{h-1-k}\bs{B}_{c} \; \bs{C}]} = \rank{\bs{C}}$\label{alg:greedy-if}}{
				$\texttt{cand}[k] \leftarrow \texttt{cand}[k] \setminus \{c\}$\;\label{alg:greedy-if-end}
			}
		}
	}
	\textbf{return} \texttt{cand}, $\sch$.
\end{algorithm}

After including all necessary input channels to span the kernels of $\bs{A},\bs{A}^2,\dots,\bs{A}^{R}$ so as to fulfill \Cref{thm:kernel-condition},
\cref{alg:sparse-greedy} ensures controllability through the selected schedule with the ``for'' loop at \cref{alg:s-greedy-increase-rank}.
This is achieved via the subroutine \texttt{greedy} in \cref{alg:greedy},
which is the classic greedy selection but with the option to check the rank of the controllability Gramian through the flag \texttt{rk}.
When \texttt{rk} is raised (\cref{alg:greedy-rk}),
all candidate channels that cannot increase the rank of the controllability Gramian accrued so far are removed at \cref{alg:greedy-if,alg:greedy-if-end}.
This ensures that the ``for'' loop at \cref{alg:s-greedy-increase-rank} of \cref{alg:sparse-greedy} eventually makes the $\sch$-controllability Gramian full rank.

Finally,
the last ``while'' loop at \cref{alg:s-greedy-drop-cost} adds channels till the cost cannot be further decreased or the selected schedule has reached the $s$-sparsity limit.\footnote{
	Because the greedy selection is suboptimal,
	if the cost cannot be decreased in one iteration,
	one can randomly select channels to possibly drop the cost  after  multiple iterations.
	We do not explore this possibility in the current \paperType and leave a more comprehensive numerical evaluation for future work.
}
Notably,
this last selection need not care about the rank of the controllability Gramian,
and the cost can be computed exactly by setting $\epsilon=0$.

\extended{\review{Assuming that computing the cost $\rho(\gram{\sch}{h})$ requires $O(n^\beta)$ operations,
	the computational complexity of \Cref{alg:sparse-greedy} is $O(hm\max\{n^{\beta+1},n^4)\}$,
	the same of the naive greedy.
	The detailed analysis is provided in the technical report~\cite{arxiv}.
}}{}
			\extended{}{

\subsubsection*{Computational complexity}

We analyze the computational cost of \Cref{alg:sparse-greedy} below.
\begin{description}
	\item[First ``for'' loop (Lines~\ref{alg:s-greedy-loop-kernel} to \ref{alg:s-greedy-loop-kernel-end}):] This loop runs for $h-1$ iterations.
	Each iteration computes	the subspace $\mathcal{K}_k$ at \Cref{alg:s-greedy-kernel} and runs the subroutine \texttt{greedy\_k} in \Cref{alg:greedy-k} at \Cref{alg:s-greedy-kernel-greedy}.
	\begin{description}
		\item[\boldmath Computing $\mathcal{K}_k$:] This step computes powers of $\bs{A}$ and then their null spaces.
		Both operations have complexity $O(n^3)$.
		Note that both the powers of $\bs{A}$ and their kernels can be computed just once	before running the algorithm,
		so that execution of \Cref{alg:s-greedy-kernel} requires only selecting the subspaces of interest.
		\item[Running \texttt{greedy\_k}:] This subroutine runs for at most $s$ iterations.
		Each iteration computes the cost for at most $m$ channels in \texttt{cand},
		selects the best channel,
		and runs the test at \Cref{alg:greedy-k-if} for each remaining candidate.
		The latter operation involves two projections onto $\mathcal{K}_k$ and the projection of either of the two projected subspaces onto the other,
		with complexity $O(n^3)$.
		Letting the computation of the cost function have complexity $O(n^\beta)$,
		the total complexity of \texttt{greedy\_k} is thus $O(s(mn^\beta + mn^3)) \subseteq O(sm\max\{n^\beta,n^3\})$.
	\end{description}
	The complexity of this loop is $O(hms\max\{n^\beta, n^3\})) $.
	\item[Second ``for'' loop (\Cref{alg:s-greedy-increase-rank,alg:s-greedy-increase-rank-end}):] This loop runs for at most $n$ iterations,
	with each iteration running the subroutine \texttt{greedy} in \Cref{alg:greedy} with rank check.
	This subroutine computes the best candidate channel at \Cref{alg:greedy-selection} and performs the check at \Cref{alg:greedy-if} for remaining candidates.
	\begin{description}
		\item[Selecting the best channel:] This operation computes the cost function at most $hm$ times,
		with total complexity $O(hmn^\beta)$.
		\item[Checking the rank:] This check computes the rank of an $n \times p$ matrix, with $p\le n$, at most $hm$ times, with total complexity $O(hmn^{3})$.
	\end{description}
	The total complexity of this loops is thus $O(n(hmn^\beta + hmn^{3})) \subseteq O(hm\max\{n^{\beta + 1},n^{4}\})$.
	\item[``While'' loop (Lines~\ref{alg:s-greedy-drop-cost} to~\ref{alg:s-greedy-drop-cost-end})] This loop runs the subroutine \texttt{greedy} at most $sh - n$ times without rank check,
	with total computational complexity in $O((sh - n)hmn^\beta)$.
\end{description}

Summing all three contributions and considering $sh \in O(n)$,
the $s$-sparse improved greedy algorithm has computational complexity of polynomial order $O(hm\max\{n^{\beta + 1},n^{4}\})$,
dominated by the second ``for'' loop.
Notably,
this is the same computational complexity of the naive greedy algorithm under point-wise sparsity constraint,
which coincides with running the second ``for'' loop without rank checking.}

\subsection{$s$-Sparse Markov Chain Monte Carlo}\label{sec:design-mcmc}

\begin{algorithm}[t]
	\caption{$s$-sparse MCMC}
	\label{alg:mcmc}
	\KwIn{Matrices $\bs{A},\bs{B}$, sparsity $s$, horizon $h$, cost $\rho$, parameters $\epsilon\ge0$, $\tinit>0$, $\tmin>0$, $\alpha\in(0,1)$, \texttt{it}, initial schedule $\sch[0]$.}
	\KwOut{Schedule $\mathcal{S}$.}
	$\sch \leftarrow \sch[0]$\;
	$\costmin\leftarrow \rho(\gram{\sch}{h} + \epsilon \bs{I})$\;
	$\texttt{rk}_W \leftarrow \rankil{\gram{\sch}{h}}$ \tcp*{optional, to check controllability}
	$T\leftarrow\tinit$\;
	\While{$T > \tmin$\label{alg:mcmc-tmin}}{
		\For{$\texttt{i} = 1,\dots,\texttt{it}$\label{alg:mcmc-it}}{
			$\sch'\leftarrow\sch$\;
			$k, c_k \sim U(\sch)$ \tcp*{sample from current schedule}\label{alg:mcmc-sample}
			$c'_k \sim U([m]\setminus\sch[k])$ \tcp*{sample candidate channel}\label{alg:mcmc-sample-cand}
			$\sch[k]' \leftarrow (\sch[k] \setminus \{c_k\}) \cup \{c'_k\}$\;
			\If{check rank\label{alg:mcmc-rank}}{
				\While{$\rankil{\gram{\sch'}{h}} < \texttt{rk}_W$}{
					\textbf{go to} Line~\ref{alg:mcmc-sample} or \textbf{continue}\;
				}
			}
			$\costcurr\leftarrow\rho(\gram{\sch'}{h} + \epsilon \bs{I})$\;
			$p\leftarrow \e^{-\frac{1}{T}(\costcurr - \costmin)}$\;
			\If(\tcp*[f]{w.p. $\min\{1,p\}$}){$p> n \sim U([0,1])$\label{alg:mcmc-replace}}{
				$\sch[k] \leftarrow \sch[k]'$\;
				$\costmin\leftarrow \costcurr$\;
				\If{check rank}{
					$\texttt{rk}_W \leftarrow \rankil{\gram{\sch'}{h}}$\;
				}
			}
		}
		$T \leftarrow \alpha T$\label{alg:mcmc-decrease-T}\;
	}
	\textbf{return} $\mathcal{S}$.
\end{algorithm}

MCMC is a randomized algorithm that approximately solves combinatorial problems \review{by sampling a Markov chain supported on the domain of the optimization variable}. We summarize its workflow in \cref{alg:mcmc}.
Every sample $\sch'$ replaces the current solution $\sch$ with a probability that decreases exponentially with the cost gap (\cref{alg:mcmc-replace}).
\review{MCMC is asymptotically optimal with infinite samples,
	\ie as the parameter $T$ goes to zero and the iterations \texttt{it} go to infinity.}
In practice,
$T$ is progressively reduced down to $\tmin > 0$ (\Cref{alg:mcmc-tmin}) and finite samples are drawn for each value of $T$ (\Cref{alg:mcmc-it}),
\review{causing approximation errors and a suboptimality gap that decreases with \texttt{it} and $\tmin^{-1}$~\cite{joseph2023minimal}.}
The interested readers are referred to~\cite{Sammut2010} for details.
We adapt MCMC to \Cref{prob:design} using the output of \cref{alg:sparse-greedy} as warm start $\sch[0]$.
To enforce $s$-sparsity,
we restrict candidate samples $\sch'$ to differ from the current solution $\sch$ by only one channel in one time step $k$ (\cref{alg:mcmc-sample,alg:mcmc-sample-cand}).

\review{Although MCMC has polynomial complexity with respect to its parameters $\alpha,\tinit,\tmin$, and \texttt{it},}
it generally converges slow in practice and requires a huge number of samples to achieve good solutions for large systems.
Moreover,
its randomized nature prevents formal controllability guarantees and the output schedule may violate constraint~\eqref{eq:prob-design-constr-controllability}.
This issue can be overcome by imposing that candidate samples $\sch'$ do not decrease the rank of the controllability Gramian (\cref{alg:mcmc-rank}),
which however may significantly slow down the algorithm.
\extended{}{Moreover,
if the initial schedule $\sch[0]$ does not yield controllability (\eg it is output from the naive greedy algorithm or it is random),
it is not possible to formally ensure that the output schedule $\sch$ makes the system controllable.}

\section{Numerical Experiments}\label{sec:experiments}

We test our algorithms
on two problem instances.\extended{\footnote{
	The code used for experiments is available at \url{https://github.com/lucaballotta/sparse-control}.
}}{}

\subsection{Revisited \cref{ex}}\label{sec:experiment-small}

\begin{table}
	\caption{Schedules and costs for \cref{ex}.}
	\label{table:experiment-small}
	\begin{center}
		\small
		\begin{tabular}{l|ccccc|c}
			\toprule
			&	$\sch[0]$	&	$\sch[1]$	&	$\sch[2]$	&	$\sch[3]$	&	$\sch[4]$	& $\tr{(\gram{\sch}{h})^{-1}}$\\
			\midrule
			fully actuated	&	$[7]$	&	$[7]$	&	$[7]$	&	$[7]$	&	$[7]$	& $1.7$\\
			naive greedy	& $1$	&	$4$	&	$1$	&	$2$	&	$7$	&	uncontrollable\\
			$s$-sparse greedy	& $1$	&	$4$	&	$4$	&	$4$	&	$4$	&	$5.0$\\
			\bottomrule
		\end{tabular}
	\end{center}
\end{table}

We use \cref{alg:sparse-greedy} to solve the design of \Cref{ex}.
The output schedule and cost are reported in \autoref{table:experiment-small},
together with the schedule output by the naive greedy algorithm and the cost of the fully actuated system for the sake of comparison.
Our proposed $s$-sparse greedy algorithm picks the key input channel $4$ that guarantees controllability according to the necessary condition~\eqref{eq:rank_condition} for all time steps $k=1,2,3,4$,
thanks to the smart pre-selection at \cref{alg:s-greedy-kernel,alg:s-greedy-kernel-greedy} of \cref{alg:sparse-greedy}.

\subsection{Experiment with Large-Scale Network}\label{sec:experiment-large}

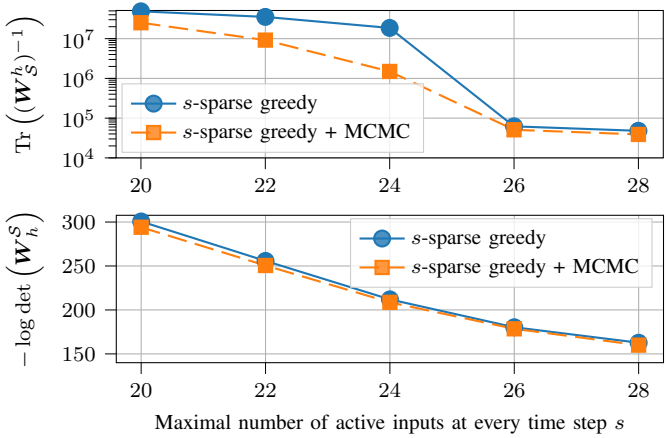
\begin{figure}
	\centering

\begin{tikzpicture}
	
	\definecolor{darkgray176}{RGB}{176,176,176}
	\definecolor{darkorange25512714}{RGB}{255,127,14}
	\definecolor{lightgray204}{RGB}{204,204,204}
	\definecolor{steelblue31119180}{RGB}{31,119,180}
	
	\begin{axis}[
		width=\linewidth,
		height=.4\linewidth,
		legend cell align={left},
		legend style={at={(.01,.27)},anchor=west,fill opacity=0.8, draw opacity=1, text opacity=1, draw=lightgray204,font=\footnotesize},
		ymode=log,
		tick align=outside,
		tick pos=left,
		x grid style={darkgray176},
		label style={font=\footnotesize},
		ticklabel style={font=\footnotesize},
		xmajorgrids,
		xmin=19.6, xmax=28.4,
		xtick style={color=black},
		y grid style={darkgray176},
		ylabel={$\tr{(\gram{\sch}{h})^{-1}}$},
		yticklabel style={
			/pgf/number format/fixed,
			/pgf/number format/precision=0
		},
		scaled y ticks=false,
		ymajorgrids,
		ymin=9999,
		ymax=51183786.4809276,
		ytick style={color=black}
		]
		\addplot [thick, steelblue31119180, mark=*, mark size=3, mark options={solid}]
		table {%
			20 48748317.2804988
			22 35287711.0878443
			24 18636185.3486982
			26 62134.5361337786
			28 47992.3111678588
		};
		\addlegendentry{$s$-sparse greedy}
		\addplot [thick, darkorange25512714, dash pattern=on 7.4pt off 3.2pt, mark=square*, mark size=2.5, mark options={solid}]
		table {%
			20 25161692.8907769
			22 9203105.71242875
			24 1496161.62057947
			26 50922.7236728156
			28 38933.2719226505
		};
		\addlegendentry{$s$-sparse greedy + MCMC}
	\end{axis}
	
\end{tikzpicture}\\

\begin{tikzpicture}
	
	\definecolor{darkgray176}{RGB}{176,176,176}
	\definecolor{darkorange25512714}{RGB}{255,127,14}
	\definecolor{lightgray204}{RGB}{204,204,204}
	\definecolor{steelblue31119180}{RGB}{31,119,180}
	
	\begin{axis}[
		width=\linewidth,
		height=.4\linewidth,
		legend cell align={left},
		legend style={fill opacity=0.8, draw opacity=1, text opacity=1, draw=lightgray204, font=\footnotesize},
		tick align=outside,
		tick pos=left,
		x grid style={darkgray176},
		label style={font=\footnotesize},
		ticklabel style={font=\footnotesize},
		xlabel={Maximal number of active inputs at every time step \(\displaystyle s\)},
		xmajorgrids,
		xmin=19.6, xmax=28.4,
		xtick style={color=black},
		y grid style={darkgray176},
		ylabel={$-\log\det\lb\gram{h}{\sch}\rb$},
		ymajorgrids,
		ymin=140, ymax=307.671486055618,
		ytick style={color=black}
		]
		\addplot [thick, steelblue31119180, mark=*, mark size=3, mark options={solid}]
		table {%
			20 300.636679184292
			22 255.767238397322
			24 211.998414544028
			26 180.297030870885
			28 162.689412948341
		};
		\addlegendentry{$s$-sparse greedy}
		\addplot [thick, darkorange25512714, dash pattern=on 7.4pt off 3.2pt, mark=square*, mark size=2.5, mark options={solid}]
		table {%
			20 294.05759665048
			22 250.554042010149
			24 208.648932562124
			26 178.521446353313
			28 159.940541757783
		};
		\addlegendentry{$s$-sparse greedy + MCMC}
	\end{axis}
	
\end{tikzpicture}
	\vspace{-3mm}
	\caption{Cost for experiment on large-scale network.
		The parameters of $s$-sparse MCMC are $\tinit=1$, $\tmin=10^{-7}$, $\alpha=0.1$, $\texttt{it}=5000$.
	}
	\label{fig:experiment-large}
\end{figure}

We benchmark \Cref{alg:sparse-greedy} on a large-scale system with $n=m=50$,
where $\bs{A}$ is the adjacency matrix of a random geometric graph scaled by the number of nodes and $\bs{B}$ is the identity matrix.
This setup mimics a network system where node (subsystem) $i$ can be directly controlled through the $i$th column of $\bs{B}$.
We generate sparse random geometric graphs with nodes in the unit square $[0,1]^2$ and radius $0.1$,
and choose time horizon $h=n=50$.
The nullity of $\bs{A}$ ranges from $10$ to $20$ and $\keril{\bs{A}} = \keril{\bs{A}^k} \; \forall k\ge1$,
meaning that $R=1$ and the critical time step for controllability is only $k = h-1$ based on \cref{lem:suff}.
The naive greedy algorithm always yields uncontrollable systems when choosing the minimal admissible sparsity $s = n-\rankil{\bs{A}}$,
even when refining its output with $s$-sparse MCMC.
In contrast,
our $s$-sparse greedy algorithm successfully yields controllable systems on all tried instances.

Finally,
we compare the costs achieved with the $s$-sparse greedy selection (\cref{alg:sparse-greedy}) and $s$-sparse MCMC (\cref{alg:mcmc}) as the sparsity constraint $s$ varies.
Figure~\ref{fig:experiment-large} illustrates the results for a system with $n-\rankil{\bs{A}}=20$ where $s$ increases from $20$ (the minimum for $s$-sparse controllability) to $28$.
Both curves are decreasing,
meaning that selecting more inputs reduces the control effort.
Also,
while MCMC yields smaller costs thanks to its exploratory approach,
the costs obtained with the $s$-sparse greedy are only slightly larger in most cases,
especially with $\rho(\cdot) = -\log\det(\cdot)$,
validating its effectiveness.
	\review{

\section{Conclusion}\label{sec:conclusion}
We investigated the sparse actuator scheduling problem for linear systems to ensure controllability with minimal control effort. 
We characterized feasible actuator schedules,  
designed a greedy algorithm with formal controllability guarantee, and enhanced it using MCMC-based randomized optimization. 
Numerical tests prove our algorithms effective even when the naive greedy selection fails. 
Future work could explore stricter sparsity constraints, such as slowly varying active input sets. }
	
	\appendices
	\crefalias{section}{appendix}

\section{Proof of \Cref{lem:suff}}\label{app:suff}

We define $\bs{K}^{(k)}=\bs{I}-\bs{A}^k(\bs{A}^k)^{\dagger}$ as the projection matrix onto $\keril{(\bs{A}^{k})^\top}$, where $(\cdot)^{\dagger}$ is the pseudoinverse. 
As the subspace orthogonal to $\keril{(\bs{A}^{k})^\top}$ is $\imil{\bs{A}^{k}}$, we get
\begin{equation}\label{eq:orthogonalspaces}
	\ker{\lb\bs{A}^{k}\rb^\top} \ominus \ker{\lb\bs{A}^{k-1}\rb^\top} = \im{\bs{K}^{(k)}\bs{A}^{k-1}}.
\end{equation}
We note that~\eqref{eq:suff_feasibility} trivially holds if $R=0$ by setting $k=R+1$.
Then,
to prove~\eqref{eq:suff_feasibility} for all $k\in[R]$ it suffices to prove that there exists an index set $\cl{S}_{h-k}$ for each $k\in[R]$ such that
\begin{equation}\label{eq:suff_condition}
	\im{\bs{K}^{(k)}\bs{A}^{k-1}} = \im{\bs{K}^{(k)}\bs{A}^{k-1}\bs{B}_{\cl{S}_{h-k}}}. 
\end{equation}
Since the system is controllable,
it holds
\begin{equation} \label{eq:span_AB}
	\begin{aligned}
		\im{\bs{K}^{(k)}\bs{A}^{k-1}} &=\im{\bs{K}^{(k)}\bs{A}^{k-1}\bs{\Phi}^n} \\
		&=\im{\bs{K}^{(k)}\bs{A}^{k-1}\bs{B}},
	\end{aligned}
\end{equation}
because $\bs{K}^{(k)}\bs{A}^{k-1}\bs{A}^i=[\bs{I}-\bs{A}^{k}(\bs{A}^{k})^{\dagger}]\bs{A}^{k}\bs{A}^{i-1}=\bs{0}$, for $i>0$. 
Applying the Sylvester rank inequality to~\eqref{eq:orthogonalspaces} yields 
\begin{multline}
	\rank{\bs{K}^{(k)}\bs{A}^{k-1}} =\ls n-\rank{\bs{A}^{k}} \rs-\ls n-\rank{\bs{A}^{k-1}}\rs\\
	=\rank{\bs{A}^{k-1}} - \rank{\bs{A}^{k}}    \leq n-\rank{\bs{A}}\leq s.
\end{multline}
From \eqref{eq:span_AB}, we conclude that there exist $s$ columns in $\bs{K}^{(k)}\bs{A}^{k-1}\bs{B}$,
indexed by $\cl{S}_{h-k}$,
such that \eqref{eq:suff_condition} holds.
Also, for any $\bs{z}\in\keril{(\bs{A}^R)^{\top}}$, we have $\bs{z} = [\bs{I}-\bs{K}^{(1)}]\bs{z}+\bs{K}^{(1)}\bs{z}$. Here, $\bs{K}^{(1)}\bs{z}=\bs{K}^{(1)}\bs{B}_{\sch[h-1]}\bs{v}(h-1)$
for some $\bs{v}(h-1)$ due to \eqref{eq:suff_condition} with $k=1$. 
Hence, defining $\bar{\bs{z}}(1) \doteq  [\bs{I}-\bs{K}^{(1)}][\bs{z}-\bs{B}_{\sch[h-1]}\bs{v}(h-1)]$, we have
\begin{equation}
	\bs{z} = \bar{\bs{z}}(1)+\bs{B}_{\sch[h-1]}\bs{v}(h-1).
\end{equation}
Recursively using \eqref{eq:suff_condition}, there exist $\{\bs{v}(k)\}_{k=1}^R$ such that
\begin{equation}\label{eq:z}
	\begin{aligned}
		\bs{z} &= \ls \bs{I}-\bs{K}^{(2)}\rs\bar{\bs{z}}(1)+\bs{K}^{(2)}\bar{\bs{z}}(1)+\bs{B}_{\sch[h-1]}\!\bs{v}(h-1)\\
		&= \bar{\bs{z}}(2)+\sum_{k=1}^{2}\bs{A}^{k-1}\bs{B}_{\sch[h-k]}\bs{v}(h-2)\\
		&=\bar{\bs{z}}(R)+\sum_{k=1}^{R}\bs{A}^{k-1}\bs{B}_{\sch[h-k]}\bs{v}(h-k),
	\end{aligned}
\end{equation}
where $\bar{\bs{z}}(k)=[\bs{I}-\bs{K}^{(k)}][\bar{\bs{z}}(k-1)-\bs{B}_{\sch[h-k]}\bs{v}(h-k)]$.
Since $\bs{z}\in\keril{(\bs{A}^R)^{\top}}$, multiplying~\eqref{eq:z} with $\bs{K}^{(R)}$ gives 
\begin{equation}
	\bs{K}^{(R)}\bs{z}=\bs{z} = \bs{K}^{(R)}\sum_{k=1}^{R}\bs{A}^{k-1}\bs{B}_{\sch[h-k-1]}\bs{v}(k),
\end{equation}
proving the desired result \eqref{eq:span_null}.

\section{Proof of \Cref{thm:kernel-condition}}\label{app:kernel-condition}

We begin by noting that $\bb{R}^n=\keril{(\bs{A}^R)^{\top}}\oplus\cl{A}_R$, 
where $\cl{A}_R \doteq \imil{\bs{A}^R}$. 
From \Cref{lem:suff}, it suffices to prove that there exists an $s$-sparse schedule $\sch\in\cl{T}_m^h$ such that 
\begin{equation}\label{eq:span_range}
	\cl{A}_R = \im{\begin{bmatrix}
			\bs{A}^{h-1}\bs{B}_{\sch[0]}& \bs{A}^{h-2}\bs{B}_{\sch[1]} & \ldots & \bs{A}^{R}\bs{B}_{\sch[h-R-1]}
	\end{bmatrix}}.
\end{equation}
To this end, let the real Jordan canonical of $\bs{A}$ be
\begin{equation}\label{eq:Jordan1}
	\bs{A} =\bs{P}^{-1}\begin{bmatrix}
		\bs{J} & \bs{0}\\
		\bs{0} & \bs{N}
	\end{bmatrix}\bs{P}=\begin{bmatrix}
		\bs{P}^{(1)} \\ \bs{P}^{(2)}
	\end{bmatrix}^{-1}\begin{bmatrix}
		\bs{J} & \bs{0}\\
		\bs{0} & \bs{N}
	\end{bmatrix}\begin{bmatrix}
		\bs{P}^{(1)} \\ \bs{P}^{(2)}
	\end{bmatrix},
\end{equation}
where $\bs{P}= [\bs{P}^{(1)\T} \ \bs{P}^{(2)\T}]^{\T}$ is an invertible matrix, \review{with the columns of $\bs{P}^{(1)}$ and $\bs{P}^{(2)}$ are the generalized eigenvectors of $\bs{A}$ corresponding to the nonzero and zero eigenvalues of $\bs{A}$, respectively. Also,} the square matrices $\bs{J}\in\bb{R}^{J\times J}$ and $\bs{N}$ are formed by the Jordan blocks of $\bs{A}$ corresponding to its nonzero and zero eigenvalues, respectively. 
We see that $\bs{N}^R=\bs{0}$, for any $k\geq R$. As a result, $J\doteq\rank{\bs{J}}\leq n-R$, and we deduce 
\begin{equation}\label{eq:JordanA_v2}
	\cl{A}_R = \im{\bs{P}^{-1}\begin{bmatrix}
			\bs{J}^R & \bs{0}\\
			\bs{0} &\bs{0}
		\end{bmatrix}\bs{P}} = \im{\begin{bmatrix}
			\bs{J}^R \\
			\bs{0}
	\end{bmatrix}}.
\end{equation}
Further, premultiplying \eqref{eq:systemmodel} with $\bs{P}^{(1)}$ gives
\begin{equation}
	\bs{P}^{(1)}\bs{x}(k+1)=\bs{J}\bs{P}^{(1)} \bs{x}(k)+\bs{P}^{(1)}\bs{B}\bs{u}(k).
\end{equation}
The linear dynamical system $(\bs{J},\bar{\bs{B}})$ with $\bar{\bs{B}}\doteq\bs{P}^{(1)}\bs{B}$ is $s$-sparse controllable for any $s\geq 1$,
because $(\bs{A},\bs{B})$ is controllable and $\bs{J}$ is invertible. Hence, by \Cref{thm:sparse-schedule}, there exist sets $\sch[0],\sch[1],\ldots,\sch[J-1]$ such that $\lv\sch[k]\rv\leq s$ and 
\begin{equation}
	\bb{R}^{J}= \im{\begin{bmatrix}
			\bs{J}^{J-1}\bar{\bs{B}}_{\sch[0]} & \bs{J}^{J-2}\bar{\bs{B}}_{\sch[1]}&\ldots & \bar{\bs{B}}_{\sch[J-1]}
	\end{bmatrix}}.
\end{equation}
Consequently, from \eqref{eq:JordanA_v2}, we have
\begin{equation}
	\begin{aligned}
		\cl{A}_R&= \im{\begin{bmatrix}
				\bs{J}^{R+J-1}\bar{\bs{B}}_{\sch[0]} & \bs{J}^{R+J-2}\bar{\bs{B}}_{\sch[1]}& \!\!\! \dots \!\!\!  &\bs{J}^{R}\bar{\bs{B}}_{\sch[J-1]}\\
				\bs{0} &\bs{0} & \!\!\! \dots \!\!\! & \bs{0}
		\end{bmatrix}}\\
		&= \im{\begin{bmatrix}
				\bs{A}^{R+J-1}\bs{B}_{\sch[0]} & \bs{A}^{R+J-2}\bs{B}_{\sch[1]} & \!\!\!\dots\!\! & \bs{A}^R\bs{B}_{\sch[J-1]}
		\end{bmatrix}}.
	\end{aligned}
\end{equation}
Due to $h\geq n$, it follows $h-R\geq n-R\geq J$ and thus $R+J\le h$, implying \eqref{eq:span_range} holds, and the proof is complete.
	
	\extended{
		\bibliographystyle{IEEEtran}
		\bibliography{biboptions,bibfile}
		}{

	}
	
\end{document}